\def\P{\mathcal{P}}
\DeclareMathAlphabet{\curly}{U}{rsfs}{m}{n}
\newtheorem{theorem}{Theorem}
\newtheorem{lemma}{Lemma}
\newcommand{\RR}{{\mathbb R}}
\newcommand{\NN}{{\mathbb N}}
\newcommand{\g}{\ensuremath{\gamma}}
\newcommand{\del}{\ensuremath{\delta}}
\newcommand{\lam}{\ensuremath{\lambda}}
\newcommand{\eps}{\ensuremath{\varepsilon}}
\newcommand{\fl}[1]{{\ensuremath{\left\lfloor {#1} \right\rfloor}}}
\newcommand{\pfrac}[2]{{\left(\frac{#1}{#2}\right)}}
\newcommand{\ds}{\displaystyle}
\newcommand{\be}{\begin{equation}}
\newcommand{\ee}{\end{equation}}
\newcommand{\benn}{\begin{equation*}}   
\newcommand{\eenn}{\end{equation*}}
\renewcommand{\b}{\ensuremath{\beta}}
\renewcommand{\a}{\ensuremath{\alpha}}
\renewcommand{\(}{\left(}
\renewcommand{\)}{\right)}
\numberwithin{equation}{section}
\title[Diophantine approximation with arithmetic functions]
{Diophantine approximation with arithmetic functions, I}
\author{Emre Alkan, Kevin Ford and Alexandru Zaharescu}
\thanks{Key words and phrases:  Diophantine approximation,
additive functions, multiplicative functions}
\date{\today}
\subjclass[2000]{11N64, 11N36, 11K60}
\thanks{Second author supported in part by National Science Foundation
  Grant DMS-0555367.  Third author supported in part by National
  Science Foundation Grant DMS-0456615.}
\begin{document}

\begin{abstract} We prove a strong simultaneous Diophantine
approximation theorem for values of additive and multiplicative
  functions provided that
the functions have certain regularity on the primes.
\end{abstract}

\maketitle

\section{Introduction}

There is a rich literature on problems of approximating real numbers
by rational numbers with multiplicative restrictions on the
denominator of the rational number, e.g. \cite{AHZ}, \cite{Ba},
\cite{Fr} and the references therein.  We are concerned here with
approximating real numbers by values of additive and multiplicative
functions. One of the classical results in this area is the 1928
theorem of Schoenberg \cite{Scho}, which states that $\phi(n)/n$ has
a continuous distribution function, that is,
$$
F(z)=\lim_{x\to \infty} \frac{1}{x} |\{ n\le x :\phi(n)/n \le z\}|
$$
exists for every real $z$, and $F(z)$ is continuous.
Here $\phi$ is Euler's totient function.
In particular, $\phi(n)/n$ is dense in $[0,1]$, or equivalently,
the additive function $\log \phi(n)/n$ is dense in $(-\infty,0]$
(in general, $f(n)$ is additive if and only if $e^{f(n)}$ is multiplicative).
Erd\H os and Wintner \cite{EW} later determined precisely which real
additive functions have continuous distribution functions.
These include $\log \phi(n)/n$ and its close cousin $\log \sigma(n)/n$,
where $\sigma(n)$ is the sum of the divisors of $n$.
A stronger approximation theorem was proved by Wolke \cite{W}.
Let $\Gamma$ denote the infimum of numbers $\gamma$ so that for large
$x$, there is a prime in  $(x-x^\gamma,x]$.
Wolke proved that for any real $\b \ge 1$ and
any $c<1-\Gamma$, there are infinitely many integers $n$ with
$|\frac{\sigma(n)}{n}-\beta| < n^{-c}$.
It is conjectured that $\Gamma=0$, however we only know that
$\Gamma \le 0.525$ \cite{BHP}.

In the 1950's, several papers appeared concerning the distribution
of values of $\phi(n)$ and the sum of divisors function $\sigma(n)$
at consecutive
integers.  A major unsolved problem is whether, for fixed $k\ne
0$, the equations
$\phi(x+k)=\phi(x)$ or $\sigma(x+k)=\sigma(x)$ have infinitely
many solutions.  For the latest work on the
problem for $\phi$, see \cite{GHP}.   Schinzel \cite{Sc}
proved that for any $h\ge 1$, $\eps>0$ and positive real numbers
$\a_1,\cdots,\a_h$, the system of simultaneous inequalities
\be\label{Schinzel}
\left| \frac{\phi(n+i)}{\phi(n+i-1)} - \a_i \right| < \eps \qquad (1\le i\le h)
\ee
has infinitely many solutions.  Six years later Schinzel teamed with Erd\H os
\cite{ES} to show that \eqref{Schinzel} holds for a positive proportion of
integers $n$, and to generalize this result to a wide class of
additive and multiplicative functions.


It is interesting to ask how fast $\eps=\eps(n)$ can tend to zero as a
function of $n$ in \eqref{Schinzel}.  In particular,
Erd\H os \cite{E} posed the problem to show that
for some $c<1$, the inequalities
$$
|\phi(n+1)-\phi(n)| < n^{c} \qquad \text{ and } \qquad
|\sigma(n+1)-\sigma(n)| < n^{c}
$$
each have infinitely many solutions.

Our first results solve Erd\H os' problem and generalize
the aforementioned
theorems of Schinzel and Wolke.  In particular, we may replace
$\eps$ on the right side of \eqref{Schinzel} with $n^{-c_h}$ for some
positive $c_h$.
As in \cite{ES}, we state our results for a wide class of additive functions.
For any $\delta>0$ and $\lambda>0$ we denote by $\mathcal F_{\delta,\lambda}$
the set of additive functions $f: \NN\rightarrow \RR$ with the
following properties:

(a) We have
\benn
\sum_{\substack{p\,\,{\rm prime}\\f(p)>0}}f(p) = \infty.
\eenn

(b) There exists a constant $C(f)>0$ depending on $f$ such that
\benn
\left|f(p^v)\right|\leq \frac{C(f)}{p^{\delta}}\,,
\eenn
for any prime number $p$ and $v\ge 1$.

(c) There exists $t_0(f)>0$ depending on $f$ such that for any
$0<t\leq t_0(f)$ there is a prime number $p$ satisfying
\begin{equation*}
t-t^{1+\lambda}\leq f(p)\leq t.
\end{equation*}

We remark that (a) and (b) imply that $\delta \le 1$.
Also, (b) and (c) together imply that $\delta \lambda < 1$.
To see this, let $\eps>0$  and $S=|\{ p : f(p)>\eps\}|$.
By (b),
$$
S \le | \{ p : C(f)/p^\delta > \eps \}| = o(\eps^{-1/\del}) \quad
(\eps\to 0^+).
$$
On the other hand (cf. \eqref{vjbound} below),
the interval $[\eps,t_0(f)]$ contains $\gg \eps^{-\lam}$ disjoint intervals
of the form $(t-t^{1+\lam},t]$.  Hence $S \gg \eps^{-\lam}$.
Finally, if (c) holds with $\lambda>1$ then (a) follows.

\begin{theorem}\label{Theorem1}
Fix an integer $k\geq1$ and real numbers $0<\delta\le 1$, $0<\lambda<1/\del$.
Let $f_1,\dots,f_k$ be functions in $\mathcal F_{\delta,\lambda}$,
and let $A=1$ if all $f_i$ are identical, and $A=2$ otherwise.
Suppose $a_1,\ldots,a_k$ are positive integers, $b_1,\ldots,b_k$
are nonzero integers and
\be\label{aibi}
a_i b_j \ne a_j b_i \quad (1\le i < j \le k).
\ee
If $\a_i>f_i(b_i)$ for
$1\leq i\leq k$ and
\be\label{e1}
0<c< \begin{cases} \del\lam & {\rm if}\;\; k=1 \text{ and } a_1\in\{1,2\} \\
\frac{\delta\lambda}{A k+\lambda\b_{k}} & {\rm otherwise}\,, \end{cases}
\ee
then there are infinitely many positive integers $m$ satisfying
\be\label{e2}
\left| f_i(a_i m + b_i)-\a_i\right|<\frac1{m^c} \qquad (1\le i\le k).
\ee
Here $\b_k$ is an admissible value of
a ``sieve limit for a sieve of dimension $k$'', which will be described below
(Theorem DHR).  In particular, $\b_1=2$, $\b_2<4.2665$ and $\b_k=O(k)$.
\end{theorem}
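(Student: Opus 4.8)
\medskip
\noindent\textbf{Proof strategy.}
The plan is to exploit additivity: $f_i(n)=\sum_{p^v\|n}f_i(p^v)$, so by (b) the contribution to $f_i(n)$ of the prime powers with $p>z$ is $O\big(\tfrac{\log n}{\log z}\,z^{-\delta}\big)$, which is negligible once $z$ is a fixed power of $n$. Hence it is enough to produce infinitely many $m$ such that, for each $i$, $a_im+b_i=D_in_i$ where $D_i$ is a prescribed squarefree integer with $\sum_{p\mid D_i}f_i(p)$ within $m^{-c'}$ of $\alpha_i-f_i(b_i)$ for a $c'$ slightly exceeding $c$, $\gcd(D_i,n_i)=1$, $n_i$ has no prime factor $\le z$, and the $D_i$ are pairwise coprime and coprime to $a_1\cdots a_k$. (One first arranges $b_i\mid a_im+b_i$ by a congruence $b_i/\gcd(a_i,b_i)\mid m$ together with finitely many further congruences; this replaces the forms by $k$ pairwise non-proportional linear forms with constant term $1$ — non-proportionality coming from \eqref{aibi} — and is what introduces the shift by $f_i(b_i)$ and the hypothesis $\alpha_i>f_i(b_i)$.) Granting this, $f_i(a_im+b_i)=f_i(b_i)+\sum_{p\mid D_i}f_i(p)+O\big(\tfrac{\log m}{\log z}z^{-\delta}\big)=\alpha_i+O\big(m^{-c'}+\tfrac{\log m}{\log z}z^{-\delta}\big)$, which is $<m^{-c}$ provided $\log z\ge\tfrac c\delta\log m$.

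The construction of $D_i$ is the arithmetic core. In a first phase one uses (a) to absorb finitely many primes with $f_i(p)>0$ until the partial sum is below $\alpha_i-f_i(b_i)$ by at most $t_0(f_i)$. In the second phase one iterates (c): given a residual $r\le t_0(f_i)$, pick a prime $p$ with $r-r^{1+\lambda}\le f_i(p)\le r$, append $p$ to $D_i$, and pass to the residual $r-f_i(p)\in[0,r^{1+\lambda}]$; with a small adjustment on the last step to avoid overshooting, after $J\asymp\log\log m$ steps one reaches a residual $\rho_i$ with $m^{-c'}\le\rho_i\le m^{-c}$. By (b) the prime chosen at residual $r$ is $\ll r^{-1/\delta}$, and since the residuals shrink like $r\mapsto r^{1+\lambda}$, the logarithms of the chosen primes form a geometric progression whose sum is $\ll\tfrac1{\delta\lambda}\log(1/\rho_i)$; thus $D_i\ll m^{Ac'/(\delta\lambda)}$. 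Here the factor $A$ records whether one joint construction of auxiliary primes can be made to serve all $k$ forms simultaneously — possible when the $f_i$ coincide ($A=1$), not in general ($A=2$). Throughout, property (c) in fact supplies $\gg\varepsilon^{-\lambda}$ admissible primes in $[\varepsilon,t_0(f_i)]$, the estimate \eqref{vjbound} alluded to above, so there is ample freedom to keep all chosen primes distinct and to avoid the finitely many ``bad'' primes dividing $a_1\cdots a_k$ or the differences of leading coefficients.

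With $D=\prod_iD_i$ fixed, the Chinese Remainder Theorem pins $m$ to a class $m_0\pmod D$ making $D_i\mid a_im+b_i$ with each quotient coprime to the bad primes; substituting $m=m_0+Dm'$ turns the $k$ quotients $(a_im+b_i)/D_i$ into pairwise non-proportional linear forms in $m'\in[1,x/D]$. A lower-bound sieve of dimension $k$ (Theorem DHR, with sifting limit $\beta_k$: $\beta_1=2$, $\beta_2<4.2665$, $\beta_k=O(k)$) then gives $\gg\tfrac{x/D}{(\log x)^k}$ values of $m'$ for which all these forms have no prime factor $\le z$ — valid as long as $\log z\le\tfrac1{\beta_k}(\log x-\log D)$. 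Combining this with the requirements $\log z\ge\tfrac c\delta\log x$ and (taking $c'\downarrow c$) $\log D\le\tfrac{Akc}{\delta\lambda}\log x$, feasibility amounts to $\tfrac c\delta\le\tfrac1{\beta_k}\big(1-\tfrac{Akc}{\delta\lambda}\big)$, i.e.\ exactly $c<\tfrac{\delta\lambda}{Ak+\lambda\beta_k}$; letting $x\to\infty$ yields infinitely many $m$. In the exceptional case $k=1$, $a_1\in\{1,2\}$ no sieving is needed: one takes $n_1\in\{1,2\}$ so that $a_1m+b_1$ equals $D_1$ up to a bounded factor, whence $m\asymp D_1$ and the only error is $\rho_1\ll D_1^{-\delta\lambda}\asymp m^{-\delta\lambda}$, which admits any $c<\delta\lambda$.

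The main obstacle is the tension inside the exponent: the sifting level $\log z$ must be large enough for the rough part of $n_i$ to be negligible, yet the modulus $D$ must be large enough for the residuals $\rho_i$ to fall below $m^{-c}$, and $D$ directly eats into the sieve's level of distribution $x/D$. Making both demands optimal — $z\approx x^{c/\delta}$ and $D\approx x^{Akc/(\delta\lambda)}$ — is what forces the precise shape of \eqref{e1}, and hinges on (i) the sharp bound $D_i\ll m^{Ac'/(\delta\lambda)}$, which requires a careful count of the primes produced by iterating (c) together with the economy available exactly when the $f_i$ agree; and (ii) verifying, uniformly as $D$ grows with $x$, that the hypotheses of the $k$-dimensional sieve hold for the reduced forms (admissible level of distribution $x/D$, local densities bounded away from $0$ and $1$). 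A more routine but still delicate layer is the congruence bookkeeping that keeps the $D_i$ pairwise coprime and compatible with the class of $m_0$ without inflating the size estimates.
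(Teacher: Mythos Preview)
Your proposal is correct and follows the same architecture as the paper: build pairwise-coprime squarefree moduli $D_i$ (the paper's $n_{i,j}$, via Lemmas~1 and~2) by iterating condition~(c), pin $m$ to a residue class modulo $D=\prod D_i$ by CRT, sieve the cofactors with the $k$-dimensional DHR sieve, and balance $\log z\approx\tfrac c\delta\log m$ against $\log D\approx\tfrac{Akc}{\delta\lambda}\log m$ to land on \eqref{e1}; the $k=1$, $a_1\in\{1,2\}$ shortcut is also the same. One point to sharpen: your geometric-sum estimate actually gives $D_i\ll\rho_i^{-1/(\delta\lambda)}$ without the factor $A$; the loss when the $f_i$ differ is not that a ``joint construction'' is impossible but that keeping the prime sets for different $i$ disjoint forces you to iterate with an exponent $\xi<\lambda/2$ rather than $\xi<\lambda$ (cf.\ the two cases in the paper's Lemma~1), which is precisely what replaces $\lambda$ by $\lambda/A$ in the size bound for $D_i$.
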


\begin{theorem}\label{Theorem2}
Fix an integer $k\geq1$ and real numbers $0<\delta\le 1$, $0<\lambda<1/\del$.
Let $f_0,\dots,f_{k}$ be functions in
$\mathcal F_{\delta,\lambda}$, with $A=1$ if $f_1,\ldots,f_k$ are
identical and $A=2$ otherwise.
Suppose $a_0,a_1,\ldots,a_k$ are positive integers, $b_1,\ldots,b_k$
are integers and \eqref{aibi} is satisfied.  If
$\zeta_1,\dots,\zeta_k$ are arbitrary real numbers, and
\benn
0<c< \begin{cases} \frac{\del\lam}{1+4\lam} & {\rm if}\;\; k=1 \\
\frac{\delta\lambda}{Ak+\lambda \beta_{k+1}} & {\rm if}\;\; k\ge 2, \end{cases}
\eenn
then there are infinitely many positive integers $m$ satisfying
\benn
\left| f_{i}(a_im+b_i)-f_{i-1}(a_{i-1}m+b_{i-1})-\zeta_i\right|
<\frac1{m^c} \qquad (1\le i\le k).
\eenn
\end{theorem}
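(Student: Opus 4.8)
The argument runs on the same engine as Theorem~\ref{Theorem1}, which we first recall in outline. To force $f(am+b)$ to within $m^{-c}$ of a prescribed real $\a$, one manufactures $m$ so that $am+b=d\,e$, where the ``core'' $d$ is a product of prescribed prime powers with $|f(d)-\a|$ small, while every prime factor of $e$ exceeds $z:=m^{\eta}$. Property (b) then gives $|f(e)|\le\sum_{p\mid e}C(f)p^{-\delta}\ll(\log m)\,z^{-\delta}\ll m^{-\eta\delta+o(1)}$, so $\eta\delta>c$ makes $e$ harmless. The core is built in two stages: by (a) a \emph{fixed} finite set of primes brings a partial sum of $f$-values to within $t_0(f)$ of $\a$ (this requires $\a$ to exceed the $f$-value of those fixed factors, whence a lower threshold on $\a$, namely $f_i(b_i)$ in Theorem~\ref{Theorem1}), and then (c), applied only $O(\log\log m)$ times, closes the gap: each application replaces the current gap $t$ by one of size $\le t^{1+\lambda}$, and (b) bounds the primes so produced, giving $d\ll m^{\gamma c+o(1)}$ for an explicit $\gamma=\gamma(\delta,\lambda)$. (A naive approximation that directly $m^{-c}$-nets the interval would need a product of primes of size $\exp\!\big(m^{\Omega(c)}\big)$, far too large; the recursive use of (c) is what keeps $d$ polynomial in $m$.) To impose simultaneously, for several forms, ``$a_im+b_i=d_i e_i$ with $e_i$ free of prime factors $\le z$'', one fixes the residue of $m$ modulo $\prod_i d_i$ by the Chinese Remainder Theorem --- hypothesis \eqref{aibi} ensuring distinct forms force no common prime divisors --- and sifts the resulting progression, discarding those $m$ for which some $e_i$ still has a small prime factor. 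This is a sieve of dimension equal to the number of forms sifted, whose level $z$ may be taken essentially as large as $(M/\prod_i d_i)^{1/\beta}$, where $M\asymp m$ bounds the range and $\beta$ is the relevant sieve limit (Theorem~DHR). Balancing $\eta\delta>c$ and $z\lesssim(M/\prod_i d_i)^{1/\beta}$ against $\prod_i d_i\ll m^{O(c)}$ produces the admissible range for $c$; when all $f_i$ coincide the fixed stages of the cores can be amalgamated, which the parameter $A$ reflects.

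Suppose $k\ge2$. We exploit the single degree of freedom concealed in a system of \emph{differences}: choose a real $\a_0$ so large that $\a_i:=\a_0+\zeta_1+\cdots+\zeta_i$ exceeds, for each $i$, the threshold demanded by $f_i$, and reduce to producing infinitely many $m$ with
\benn
\left|f_i(a_im+b_i)-\a_i\right|<\frac1{2m^{c}}\qquad(0\le i\le k).
\eenn
Subtracting consecutive inequalities and using $\a_i-\a_{i-1}=\zeta_i$ gives the conclusion of Theorem~\ref{Theorem2}. But this displayed system is exactly the situation handled in the proof of Theorem~\ref{Theorem1}, now with the $k+1$ forms $a_0m+b_0,\dots,a_km+b_k$: hence a sieve of dimension $k+1$, and $\beta_{k+1}$ appears. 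Two points need the argument itself rather than a black-box appeal to Theorem~\ref{Theorem1}: some $b_i$ may vanish (then $a_im+b_i=a_im$, and one adds the condition $\gcd(m,a_i)=1$ and shifts the target by $f_i(a_i)$), and \eqref{aibi} is assumed only for $1\le i<j\le k$, so $a_0m+b_0$ may coincide with or be proportional to another form; in that case it is not sifted twice, so the sieve dimension only drops, but one must build the common core so that two \emph{distinct} additive functions hit two distinct targets on the same integer --- again arranged by (a) and (c).

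For $k=1$ there is a single constraint, on $f_1(a_1m+b_1)-f_0(a_0m+b_0)$, and it is wasteful to approximate the two values separately. One prescribes cores $u$, $v$ directly for the difference, with $|f_1(v)-f_0(u)-\zeta_1|<\tfrac12 m^{-c}$ --- a single approximation, hence one batch of refining primes --- and seeks $m$ with $a_0m+b_0=u\,p$ and $a_1m+b_1=v\,q$. Eliminating $m$ makes $q$ a fixed affine function of $p$, with $p$ confined to an arithmetic progression of modulus $\ll|uv|$, so it suffices to find $p$ in a dyadic interval for which $p$ (equivalently also $q$) has no small prime factors, the tails $f_0(p)$, $f_1(q)$ being $\ll m^{-\eta\delta+o(1)}$ as before. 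Because only one approximation is carried out, the core product is correspondingly smaller and one obtains the wider range $c<\delta\lambda/(1+4\lambda)$.

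The one genuine obstacle throughout is the conflict between precision and size: requiring $f$-values accurate to within $m^{-c}$ forces the cores to have size $\gg m^{\gamma c}$ --- with $\gamma=\gamma(\delta,\lambda)$ coming from the $O(\log\log m)$ applications of (c) --- which consumes part of the range left for sifting, while the large-prime tails are controlled only for $z\gg m^{c/\delta}$; reconciling these, uniformly as $z\to\infty$ with $M$, is exactly what fixes the admissible exponent and is the sole place where the sieve dimension --- hence the function-count $k$ (or $k+1$) and the limit $\beta$ --- truly enters. Everything else is routine: keeping the $O(\log\log m)$ refinement steps under control so that the intermediate gaps, and the primes (c) produces, stay of the expected order; ensuring the cores and the $e_i$ are coprime to the finitely many primes dividing the $a_i$; and checking that the (possibly degenerate) systems of forms arising satisfy the hypotheses of Theorem~DHR.
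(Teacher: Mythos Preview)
Your outline captures the machinery behind Theorem~\ref{Theorem1} well, but in both regimes it stops short of the exponents actually claimed in Theorem~\ref{Theorem2}.

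For $k\ge 2$: your reduction --- pick $\alpha_0$ large and run the Theorem~\ref{Theorem1} argument on the $k+1$ forms $a_0m+b_0,\dots,a_km+b_k$ --- is precisely the naive route recorded in the Remarks following the statement of Theorem~\ref{Theorem2}, and as noted there it yields only
\[
c<\frac{\delta\lambda}{A(k+1)+\lambda\beta_{k+1}}.
\]
The extra $A$ in the denominator comes from refining $k+1$ cores $n_{0,j},\dots,n_{k,j}$, so that the modulus $N_j$ is a product of $k+1$ factors each of size roughly $(\eta/2)^{-(1+\xi)^j/(\delta\xi)}$. The improvement to $Ak$ requires choosing $\alpha_0$ not merely large but \emph{exactly} equal to $f_0(b_0n_0)$ for a fixed integer $n_0$ built once and for all from~(a). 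Then no refinement of the $0$th core ever takes place: the error $|f_0(a_0m+b_0)-\alpha_0|=\bigl|f_0\bigl(\frac{a_0m+b_0}{b_0n_0}\bigr)\bigr|$ is controlled entirely by the sieved tail and is already $\ll z^{-\delta}$. Thus only $k$ cores grow with $j$, while the sieve dimension is still $k+1$, and the balance gives $\frac{\delta\lambda}{Ak+\lambda\beta_{k+1}}$. Your two ``non--black-box'' points (vanishing $b_i$, coincident forms) are real side issues but are not this saving.

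For $k=1$: sifting both tails $p$ and $q$ for small prime factors, as you describe, is a two-dimensional sieve and produces only $c<\frac{\delta\lambda}{1+\beta_2\lambda}$ with $\beta_2\approx 4.27$, not $\frac{\delta\lambda}{1+4\lambda}$. The paper instead forces one tail, namely $\frac{a_1m+b_1}{b_1n_{1,j}}$, to be \emph{prime}, and then applies a \emph{linear} sieve ($\kappa=1$, $\beta_1=2$) to the remaining form $\frac{a_0m+b_0}{b_0n_0}$, summed over these primes in an arithmetic progression of modulus $\asymp N_j$. The remainder term in Theorem~DHR is then bounded via the Bombieri--Vinogradov theorem, which allows level $y=X^{1/2-\varepsilon}$; combined with $z=y^{(1-\varepsilon)/2}$ this gives $z\approx N_j^{\mu/4}$, and the ``$4$'' in the final exponent is $2\cdot\beta_1=4$ rather than $\beta_2$. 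Your sketch invokes neither the prime restriction nor Bombieri--Vinogradov, and without them the stated range for $k=1$ is not attained.
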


{\bf Remarks.}  Theorem \ref{Theorem1} implies immediately the
conclusion of Theorem \ref{Theorem2} in the range
$$
0<c<\frac{\delta\lambda}{A(k+1)+\lambda \beta_{k+1}}\,
$$
by choosing $\a_0,\ldots,a_k$ large and satisfying
$\a_i-\a_{i-1}=\zeta_i$ for each $i$.
Larger values of $c$ are possible by making
a more judicious choice of $\a_0$.

If we assume the Elliott-Halberstam conjecture on the distribution of
primes in arithmetic progressions, then the conclusion of Theorem 2
for $k=1$ holds for
\benn
0<c<\frac{\delta\lambda}{1+2\lam}.
\eenn

\bigskip

{\bf Corollaries.}
We may apply Theorems \ref{Theorem1} and \ref{Theorem2} to the functions
$f(n)=\log (n/\phi(n))$ and $f(n)=\log(\sigma(n)/n)$.  Each of these satisfies
$f(p) = \frac{1}{p} + O(\frac{1}{p^2})$.
It follows that $f \in \mathcal F_{1,\lam}$ for any $\lam<1-\Gamma$.
Applying Theorem \ref{Theorem1} with $k=1$, $a_1=1$ and
$f_1(n)=\log(\sigma(n)/n)$ recovers Wolke's result.
Applying Theorem 2 with $f_i(n)=\log(n/\phi(n))$ shows that one may
take $\eps=n^{-c_h}$ in \eqref{Schinzel} provided that
$$
c_h < \frac{1-\Gamma}{h+(1-\Gamma)\b_{h+1}}.
$$

We also have an answer to Erd\H os' question, by
applying Theorem \ref{Theorem2} with $k=1$, $\zeta_1=0$,
$a_0=a_1=b_1=1$ and $b_0=0$.
 For any $c<\frac{1-\Gamma}{5-4\Gamma}$, the inequalities
$$
|\phi(n+1)-\phi(n)| < n^{1-c}, \qquad |\sigma(n+1)-\sigma(n)| <
n^{1-c}
$$
each have an infinite number of solutions.  In addition, for
any nonzero $a$,  $c<\frac{1-\Gamma}{5-4\Gamma}$
and for any real $\zeta$, the inequality
$$
|n/\phi(n+a)-\sigma(n)/n-\zeta| < n^{-c}
$$
has infinitely many solutions.

The methods used to prove Theorems 1 and 2 also yield similar results
for the simultaneous approximation of $f_i(g_i(n))$ where $g_i(n)$
are polynomials, provided that (a) and (c) above are suitably
strengthened.  Rather than aim for fullest generality, we illustrate
what is possible with two special cases.

\begin{theorem}\label{poly}  Suppose $h(n)=\phi(n)$ or
$h(n)=\sigma(n)$.   Let
$\Gamma'$ be the infimum of numbers $g$ so that if $x$ is large,
there is a prime $p\equiv 1\pmod{4}$ with $x-x^g < p \le x$.
For any real $\zeta$ and any
$$
c<\frac{1-\Gamma'}{1+(1-\Gamma')\beta_2},
$$
the inequality
$$
|h(n^2+1)-h(n^2+2)-\zeta| < n^{2-c}
$$
has infinitely many solutions.
\end{theorem}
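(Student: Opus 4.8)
We argue for $h=\sigma$; the case $h=\phi$ is identical, with $f(m):=\log(\sigma(m)/m)$ replaced by $g(m):=\log(m/\phi(m))$ (both are additive with $f(p^v)=p^{-1}+O(p^{-2})$, $h(m)=m\,e^{\pm f(m)}$, and $h(m)/m$ between fixed bounds and $\log\log m$). Writing $N_i=n^2+i$ we have $n^2\le\sigma(N_i)\ll n^2\log\log n$ and
\[
\sigma(n^2+1)-\sigma(n^2+2)=n^2\Big(\frac{\sigma(N_1)}{N_1}-\frac{\sigma(N_2)}{N_2}\Big)+O(\log\log n),
\]
so it suffices to make $|\sigma(N_1)/N_1-\sigma(N_2)/N_2|<\tfrac14 n^{-c}$ for infinitely many $n$. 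The decisive feature is a parity constraint: exactly one of $n^2+1,n^2+2$ is even for every $n$. Take $n$ \emph{even}, so $2\,\|\,n^2+2$ and $n^2+1$ is odd; then $\sigma(N_2)/N_2=\tfrac32\,\sigma(k)/k$ with $k=(n^2+2)/2$ odd, and the task reduces to arranging that $k$ has no small prime factor (forcing $\sigma(N_2)/N_2$ near $\tfrac32$) while $\sigma(N_1)/N_1$ is brought within $n^{-c}$ of $\tfrac32$. Since $n^2+1$ is odd, all its prime divisors are $\equiv1\pmod4$; this is precisely why the exponent $\Gamma'$, for primes $\equiv1\pmod4$ in short intervals, replaces the unconditional $\Gamma$.

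Fix $\lambda$ with $0<\lambda<1-\Gamma'$; we prove the theorem for $c<\frac{\lambda}{1+\lambda\beta_2}$ and let $\lambda\uparrow1-\Gamma'$ at the end. By the definition of $\Gamma'$, the restriction of $f$ to primes $\equiv1\pmod4$ satisfies the analogue of condition (c) with exponent $\lambda$. Running the standard greedy construction against the target $\log\tfrac32$ — a bounded product to bring the deficit below $t_0(f)$, then successive primes $p_j\equiv1\pmod4$ with $f(p_j)$ just under the running deficit, each replacing a deficit $R$ by one $<R^{1+\lambda}$ — produces after $O(\log\log n)$ steps a squarefree $d$, a product of primes $\equiv1\pmod4$ all of size $<n^{c+o(1)}$, with
\[
f(d)=\log\tfrac32+O(n^{-c}),\qquad d=n^{\,c/\lambda+o(1)}.
\]
Adding residue conditions modulo $p^2$ for $p\mid d$ which force $p\,\|\,n^2+1$ (hence $p\nmid n^2+2$) yields $n^2+1=d\,Q_1$ with $(d,Q_1)=1$ for whatever cofactor $Q_1$ survives, the genuinely small primes of $d$ inflating the modulus by only a bounded factor.

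Fix such a $d$ and the finite set of admissible residue classes for $n$, put $z=n^{c+\varepsilon}$, and apply the weighted sieve of dimension $2$ (Theorem DHR) to
\[
\mathcal A=\Big\{\frac{n^2+1}{d}\cdot\frac{n^2+2}{2}:\ n\le x,\ n\ \text{admissible}\Big\},
\]
sifting by the odd primes $\ell<z$ not dividing $d$. For such $\ell$ the number of forbidden residues is $\rho(\ell)=\rho_1(\ell)+\rho_2(\ell)$, where $\rho_1(\ell)=2$ iff $\ell\equiv1\pmod4$ and $\rho_2(\ell)=2$ iff $\left(\frac{-2}{\ell}\right)=1$, so $\sum_{\ell<z}\rho(\ell)\tfrac{\log\ell}{\ell}\sim2\log z$: a genuine sieve of dimension $2$. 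Only the weak distributional input is needed, since $n^2+1$ and $n^2+2$ are quadratic and the remainders are fractional parts of counts of $n$ in fixed progressions; thus Theorem DHR furnishes $\gg|\mathcal A|/(\log x)^2\to\infty$ values of $n\le x$ with both $(n^2+1)/d$ and $(n^2+2)/2$ free of prime factors below $z$, as long as $z$ lies within the sieve's reach. Comparing the size of $d$ with the level of distribution of $\mathcal A$ shows this holds exactly in the asserted range of $c$, which on letting $\lambda\uparrow1-\Gamma'$ is $c<\frac{1-\Gamma'}{1+(1-\Gamma')\beta_2}$. For each such $n$, both cofactors have $O(1)$ prime factors, all $\ge z$, so $f$ of either is $o(n^{-c})$; hence $\sigma(N_1)/N_1=\frac{\sigma(d)}{d}\bigl(1+o(n^{-c})\bigr)=\tfrac32+O(n^{-c})$ and $\sigma(N_2)/N_2=\tfrac32\bigl(1+o(n^{-c})\bigr)$, which is what was needed.

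The crux is the sieve step: a weighted two-dimensional sieve must be run simultaneously for the two quadratic values $n^2+1$ and $n^2+2$ with $n$ confined to a residue class whose modulus grows like a power of $n$ (from the tuning integer $d$), and still yield a positive lower bound; balancing this modulus against the sieve's reach is what determines $c$. The occurrence of $\beta_2$, rather than the smaller constant $4$ available in the linear situation of Theorems \ref{Theorem1}--\ref{Theorem2}, reflects that here one cannot realize either polynomial value as a bounded multiple of a prime. The remaining ingredients — the analogue of condition (c) for primes $\equiv1\pmod4$, immediate from the definition of $\Gamma'$, and the bookkeeping keeping $d$ coprime to the sifted part — are routine.
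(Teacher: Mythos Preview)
Your proof idea is correct and follows essentially the same route as the paper's sketch: a tuning integer built from primes $\equiv 1\pmod4$ pins down $f(n^2+1)$, the parity condition $2\mid n$ handles $n^2+2$, and a dimension-$2$ DHR sieve on the product of cofactors $\frac{n^2+1}{d}\cdot\frac{n^2+2}{2}$ removes the remaining small primes, with the same balancing yielding $c<\frac{1-\Gamma'}{1+(1-\Gamma')\beta_2}$. Your reduction to the case $\zeta=0$ (legitimate since $n^{2-c}\to\infty$) and the bare target $\tfrac32$ for $\sigma(n^2+2)/(n^2+2)$ is a mild simplification of the paper, which instead carries an auxiliary modulus $n_0$ built from primes $\equiv1,3\pmod8$ so that $m^2+2\equiv0\pmod{n_0}$ is solvable and targets the more general value $f(2n_0)$.
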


A sketch of the proof of Theorem  \ref{poly} will appear in section
\ref{sec:poly}, together with a discussion of how to deal with more
general $f_i$ and $g_i$. Likely the methods in \cite{BHP} can be
used to prove $\Gamma' \le 0.525$, but the best result available in
the literature is $\Gamma' \le 0.53$ (by considering the polynomial
$Q(x,y)=x^2+y^2$ in \cite{HKL}).

In \cite{LS}, a similar Diophantine
approximation problem is considered
for consecutive values of the kernel function
$k(n)=\prod_{p|n} p$.  Our methods do not apply, since $f(p)=0$ for
$f(n)=\log (n/k(n))$.  Luca and Shparlinski \cite{LS} show that for any vector
$(\a_1,\ldots,\a_k)$ of positive real numbers, there are infinitely
many $n$ for which
$$
\left| \frac{k(n+i-1)}{k(n+i)} - \a_i \right| < \frac{1}{n^{1/41k^3}}
\qquad (1\le i\le k-1).
$$

In a sequel paper, we will consider Diophantine approximation problems for
coefficients of modular forms.  A example of one of our results is that
for any real $\b$, there is a constant $C_\b$ so that for infinitely
many $n$,
$$
\left| \frac{\tau(n)}{n^{11/2}} - \b \right| \le \frac{C_\b}{\log n},
$$
where $\tau(n)$ is Ramanujan's function, the $n$th coefficient of
$q \prod_{m=1}^\infty (1-q^m)^{24}$.

%
\section{Preliminaries for Theorems \ref{Theorem1}, \ref{Theorem2}
and \ref{poly}}
%

\begin{lemma}\label{Lemma1}
Let $0<\del \le 1$, $0<\lam<1/\del$, $f_1,\cdots,f_k \in
\mathcal F_{\del,\lam}$, $0<\xi<\lam/A$, and $K\ge 1$.  For sufficiently small
positive $v_0$, there are
disjoint sets $\P_1,\ldots,\P_k$ of primes greater than $K$
with the following properties.
(i) Let $v_{j+1}=v_j-v_j^{1+\xi}$ for $j\ge 0$.   For each $j\ge 0$
and $1\le i\le k$, $\P_i$ contains exactly one prime
$p$ with $f_i(p)\in (v_{j+1},v_j]$.
(ii) Let $\P_0$ be the set of primes larger than $K$ which do not lie
  in any set $\P_i$.  Then
\be\label{P0sum}
\sum_{\substack{p\in \P_0 \\ f_i(p)>0}} f_i(p) = \infty \quad (1\le
i\le k).
\ee
\end{lemma}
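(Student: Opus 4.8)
The plan is to construct the sets $\P_1,\dots,\P_k$ greedily, level by level in $j$, and then verify that the leftover primes in $\P_0$ still carry a divergent sum for each $f_i$. For a fixed small $v_0$ define the decreasing sequence $v_0>v_1>v_2>\cdots$ by $v_{j+1}=v_j-v_j^{1+\xi}$; one checks that $v_j\to 0$ and, crucially, that the number of these intervals $(v_{j+1},v_j]$ meeting $[\e,v_0]$ is $\asymp \e^{-\xi}$ (this is the estimate \eqref{vjbound} referred to in the text, obtained by comparing $\sum v_j^{1+\xi}$ with an integral). For each $j\ge 0$ and each $i$, property (c) of the class $\mathcal F_{\del,\lam}$ applied with $t=v_j$ (legitimate once $v_0\le t_0(f_i)$) produces a prime $p$ with $f_i(p)\in(v_j-v_j^{1+\lam},v_j]\subseteq(v_{j+1},v_j]$, the inclusion holding because $\lam>\xi$. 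This gives candidate primes; I then need to choose one prime per pair $(i,j)$ so that the chosen primes are all distinct, exceed $K$, and so that the "using up" of primes by the $\P_i$ is sparse enough not to destroy \eqref{P0sum}.

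The key counting input is the contrast between how many primes $p$ satisfy $f_i(p)>\e$ and how many primes lie in any one interval of the $v_j$ decomposition. By property (b), $|\{p:f_i(p)>\e\}|\le|\{p:p<(C(f_i)/\e)^{1/\del}\}|=o(\e^{-1/\del})$ as $\e\to0^+$. Since the intervals $(v_{j+1},v_j]$ with $v_j\ge\e$ number $\gg\e^{-\xi}$ and $\xi<\lam/A\le\lam<1/\del$, for $\e$ small there are far more intervals than primes with $f_i(p)>\e$ available; more precisely the average interval $(v_{j+1},v_j]$ contains $O(\e^{\xi-1/\del})=o(1)$ primes, so the overwhelming majority of intervals contain at most one prime. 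I will use this to select, for each $i$ and each sufficiently large $j$, a prime $p\in\P_i$ with $f_i(p)\in(v_{j+1},v_j]$, and handle the finitely many small $j$ (where $v_j$ is not small) separately, shrinking $v_0$ and enlarging $K$ as needed so that all chosen primes exceed $K$ and are pairwise distinct across all $i$. Distinctness between different $i$'s is where $f_i$ not being identical is relevant; when two of the $f_i$ coincide one simply cannot separate them, which is why the intervals are indexed by $f_i(p)$ separately — but since we only need one prime per $(i,j)$ and there are typically several primes per interval when $A=1$ fails, I instead argue: the total number of primes used in all the $\P_i$ up to parameter $\e$ is $O(k\,\e^{-\xi})$, while the total number of primes below the corresponding threshold is $\asymp x/\log x$ with $x=(C/\e)^{1/\del}$, i.e. $\gg \e^{-1/\del}/\log(1/\e)$, which dominates $\e^{-\xi}$ since $\xi<1/\del$. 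Hence at each stage there is room to pick fresh primes.

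Finally, to get \eqref{P0sum}: fix $i$ and split the sum over $p\in\P_0$ with $f_i(p)>0$ according to the $v_j$-interval containing $f_i(p)$. In each interval $(v_{j+1},v_j]$ at most one prime (namely the one in $\P_i$, and possibly a bounded number more appropriated by other $\P_{i'}$) has been removed, so the contribution lost from interval $j$ is $O(v_j)$ times the number of removed primes there; summing the removed contributions gives $O(\sum_j v_j\cdot(\text{removed in }j))$, which by the sparsity above is dominated by a constant multiple of $\sum_j v_j^{1+\xi}=\sum_j(v_j-v_{j+1})=v_0<\infty$. Meanwhile $\sum_{f_i(p)>0}f_i(p)=\infty$ by property (a). Subtracting a finite quantity from a divergent series leaves a divergent series, so $\sum_{p\in\P_0,\,f_i(p)>0}f_i(p)=\infty$, completing the proof.

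The main obstacle is the simultaneous bookkeeping in the construction step: ensuring the $k$ sets $\P_i$ can be chosen disjoint from each other and from a neighborhood of the small primes, uniformly over all levels $j$, while keeping the count of appropriated primes small enough that the telescoping bound $O(\sum v_j^{1+\xi})=O(v_0)$ in the last step goes through. Everything else is the routine interval-counting estimate \eqref{vjbound} and an application of the hypotheses (a)–(c).
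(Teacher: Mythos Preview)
Your argument has two real gaps, one in the construction and one in the divergence step, and the second is fatal as written.

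\textbf{Construction.} You produce a candidate prime in each interval $(v_{j+1},v_j]$ by applying (c) at $t=v_j$, but then rely on a global count (``there are $\gg\e^{-1/\del}/\log(1/\e)$ primes below threshold, and we only use $O(k\e^{-\xi})$ of them'') to assert that a \emph{fresh} prime is available in each specific interval. A global surplus does not guarantee a surplus in every box. The paper's device is to observe that, because $\xi<\lam$, each interval $(v_{j+1},v_j]$ contains $\gg v_j^{\xi-\lam}\to\infty$ disjoint sub-intervals of the form $(v-v^{1+\lam},v]$, and (c) furnishes a prime in \emph{each} sub-interval. This local abundance is what makes a greedy choice of $2k$ distinct primes per level possible and is the missing idea in your sketch.

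\textbf{Divergence.} Your bound ``removed contribution $=O(\sum_j v_j\cdot(\text{removed in }j))\le C\sum_j v_j^{1+\xi}=v_0$'' cannot hold: by your own construction $\P_i$ contributes exactly one prime to every interval, so ``removed in $j$'' is at least $1$ and the loss is $\ge\sum_j v_{j+1}$. Since $v_j\asymp j^{-1/\xi}$, this sum \emph{diverges} whenever $\xi\ge 1$, which is allowed (take $\del<1$ and $\lam$ close to $1/\del$). In that regime you are subtracting one divergent series from another and concluding nothing. The paper circumvents this by choosing, alongside each $p_{i,j}\in\P_i$, a second ``backup'' prime $p'_{i,j}$ with $f_i(p'_{i,j})\in(v_{j+1},v_j]$ that is placed in $\P_0$ rather than in any $\P_{i'}$. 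When $\sum_j v_j$ diverges, these backups alone force $\sum_{p\in\P_0,\,f_i(p)>0}f_i(p)\ge\sum_j v_{j+1}=\infty$; when $\sum_j v_j$ converges one falls back on (a). Your telescoping trick $\sum_j v_j^{1+\xi}=v_0$ is correct arithmetic but is bounding the wrong quantity.
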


\begin{proof}
It is straightforward to show that for any $v_0$,
\be\label{vjbound}
v_j \sim \xi^{-1/\xi} j^{-1/\xi} \qquad (j\to \infty).
\ee
One method of proof is to compare $v_j$ to $y(j)$, where $y$
satisfies the differential equation $y' = -y^{1+\xi}$.
We will take $v_0$ satisfying
\begin{align}
v_0 &< \min_{1\le i\le k} t_0(f_i),  \label{v0a}\\
v_0 &< \min_{1\le i\le k} \min\{ f_i(p) : p\le K\text{ and } f_i(p)>0
\}. \label{v0b}
\end{align}
If $f_1,\ldots,f_k$ are identical, we also assume that
\be\label{v0c1}
v_0^{\xi-\lam} \ge 2k.
\ee
Since $\xi<\lam$, $v_0$ satisfies \eqref{v0a},
\eqref{v0b}, and \eqref{v0c1} if $v_0$ is small enough.
If $f_1,\ldots,f_k$ are not identical, then $\xi < \frac12 \lam$.
By \eqref{vjbound}, if $v_0$ is small enough then
\be\label{v0c2}
v_j^{\xi-\lam} \ge 2k^2(j+1) \qquad (j\ge 0).
\ee

Next, we construct the sets $\P_i$.  First assume $f_1,\cdots,f_k$ are
identical.  By \eqref{v0c1}, for each $j\ge 0$ the interval
$(v_{j+1},v_j]$ contains at least $2k$ disjoint intervals of the form
$(v-v^{1+\lam},v]$.  By (c) and \eqref{v0a}, each such interval
 contains a value of $f_1(p)$ for some prime
$p$.  Label these $2k$ primes $p_{i,j}, p'_{i,j}$ for $1\le i\le k$.

Assume that $f_1,\cdots,f_k$ are not all identical.
Fix $j\ge 0$ and assume that we have chosen distinct primes
$p_{i,h}, p'_{i,h}$ such that
$f_i(p_{i,h}), f_i(p'_{i,h})\in (v_{h+1},v_h]$
for $1\le i\le k$, $0\le h\le j-1$.
By \eqref{v0c2}, $(v_{j+1},v_j]$ contains at least $2k^2(j+1)$
intervals of the form $(v-v^{1+\lam},v]$.
At most $2k(k-1)j$ of these intervals contain a number of the
form $f_{i'}(p_{i,h})$ or $f_{i'}(p'_{i,h})$ for $1\le i\le k$,
$1\le i'\le k$, $i\ne i'$, $0\le h\le j-1$.  Let $T$ denote the set of
remaining intervals, so that $|T| > 2k^2$.
Take two intervals $I_1,I'_1\in T$.  By \eqref{v0a} and (c), there are
primes $p_{1,j}, p'_{1,j}$ with $f_1(p_{1,j})\in I_1$
and $f_1(p'_{1,j})\in I'_1$.  Take 4 intervals in $T\backslash \{I_1,I'_1\}$.
By \eqref{v0a} and (c), there are two of these intervals $I_2$ and $I'_2$ and
primes $p_{2,j}, p'_{2,j}$ different from $p_{1,j}$ and $p'_{1,j}$
so that $f_2(p_{2,j}) \in I_2$ and $f_2(p'_{2,j})\in I'_2$.
Continuing this process, since
$|T| \ge 2 + 4 + \cdots + 2k$, we can find $2k$ distinct intervals
$I_1, I'_1, \ldots, I_k, I'_k \in T$ and $2k$ distinct primes
$p_{1,j}, p'_{1,j}, \ldots, p_{k,j}, p'_{k,j}$, different from
the previously chosen primes $p_{i,h},p'_{i,h}$ ($1\le i\le k, h<j$) with
$f_i(p_{i,k})\in I_i$ and $f_i(p'_{i,k})\in I'_i$ for $1\le i\le k$.

In either case, for $1\le i\le k$ let
$$
\P_i = \{ p_{i,0}, p_{i,1}, \ldots \}.
$$
By \eqref{v0b}, all primes $p_{i,j}, p'_{i,j}$ are larger than $k$.
Hence, the sets $\P_1,\cdots,\P_k$ satisfy condition (i) of the lemma.
If $\sum_{j\ge 0} v_j$ converges (that is, $\xi<1$), then for each $i$,
$$
\sum_{p\in \P_i} f_i(p) < \infty,
$$
and hence by (a), \eqref{P0sum} holds.
Next assume $\sum_{j\ge 0} v_j$ diverges.  For each $i$ and every
$j\ge 0$, $p'_{i,j} \in \P_0$ and $f_i(p'_{i,j})\in
(v_{j+1},v_j]$. Hence \eqref{P0sum} holds in this case as well.
\end{proof}

%
%

\begin{lemma}\label{Lemma2}
Let $0<\del \le 1$, $0<\lam<1/\del$, $f_1,\cdots,f_k \in \mathcal
F_{\del,\lam}$, and $0<\xi<\lam/A$. Also assume that $K\ge 1$ and
that $\g_1,\ldots,\g_k$ are positive real numbers. For sufficiently
small $v_0$ and for \be\label{eta} 0 < \eta < \min\left(v_0,
6^{-1/\xi}, \g_1, \ldots, \g_k\right), \ee there are sequences
$\{n_{i,j}\}, 1\leq i\leq k$, $j=0,1,2,\dots$, such that
\begin{enumerate}
\item $n_{i,j} | n_{i,j+1}$ for each $1\leq i\leq k$ and $j\ge 0$;
\item For each $j$, the numbers $n_{1,j}$, $n_{2,j},\dots, n_{k,j}$
are pairwise relatively prime and divisible by no prime $\le K$;
\item $|f_i(n_{i,j}) - \g_i| \le 3^j \eta^{(1+\xi)^j}$ for $1\le
i\le k$, $j\ge 0$;
\item we have
\be\label{nij}
n_{i,j}\leq \frac{(2C(f_i))^{\frac{j}{\delta}} n_{i,0}}{(\eta/2)
^{\frac{(1+\xi)^j}{\del\xi}}} \qquad
(1\le i\le k, j\ge 0).
\ee
\end{enumerate}
\end{lemma}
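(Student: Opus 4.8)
The plan is to construct the sequences $\{n_{i,j}\}$ by induction on $j$, starting from $n_{i,0}=1$ (or some fixed product of primes from $\P_0$ tailored to get close to $\g_i$) and at each stage multiplying $n_{i,j}$ by one carefully chosen prime from the set $\P_i$ produced by Lemma~\ref{Lemma1}. The point of Lemma~\ref{Lemma1} is that $\P_i$ contains, for every $h\ge 0$, exactly one prime $p$ with $f_i(p)\in(v_{h+1},v_h]$, and these values shrink like $v_h\sim\xi^{-1/\xi}h^{-1/\xi}$; so the $f_i(p)$ available in $\P_i$ form a set whose successive gaps are bounded by $v_h^{1+\xi}$ near the value $v_h$. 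Applying $\P_i$ with the parameter $v_0$ chosen small (in terms of $\eta$, $K$, the $C(f_i)$, and the $t_0(f_i)$) I can, given the current error $\varepsilon_j:=f_i(n_{i,j})-\g_i$, locate a prime $p_{i,j}\in\P_i$ whose value $f_i(p_{i,j})$ lies within $O(\varepsilon_j^{1+\xi})$ of $-\varepsilon_j$, provided $|\varepsilon_j|$ is still in the usable range $(0,v_0]$; multiplying gives $n_{i,j+1}=n_{i,j}\,p_{i,j}$ with $|f_i(n_{i,j+1})-\g_i|\le|\varepsilon_j|^{1+\xi}+(\text{new error from }p_{i,j})$, which one checks is $\le 3^{j+1}\eta^{(1+\xi)^{j+1}}$ once the bound $\eta<6^{-1/\xi}$ is used to control the accumulation of the factors $3^j$ against the doubly-exponentially shrinking $\eta^{(1+\xi)^j}$.

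Conditions (1) and (2) are then automatic from the construction: divisibility $n_{i,j}\mid n_{i,j+1}$ holds because we only ever multiply by new primes; the $n_{i,j}$ for distinct $i$ are coprime because the sets $\P_1,\dots,\P_k$ are pairwise disjoint; and no prime $\le K$ divides any $n_{i,j}$ because every $\P_i$ consists of primes exceeding $K$. For (4), since $n_{i,j+1}/n_{i,j}=p_{i,j}$ and $f_i(p_{i,j})\in(v_{h_j+1},v_{h_j}]$ for the index $h_j$ with $v_{h_j}$ comparable to $|\varepsilon_j|\asymp 3^j\eta^{(1+\xi)^j}$, property~(b) of $\mathcal F_{\delta,\lambda}$ gives $|f_i(p_{i,j})|\le C(f_i)p_{i,j}^{-\delta}$, hence $p_{i,j}\le (C(f_i)/v_{h_j+1})^{1/\delta}$; summing the logarithms over the steps and using the asymptotic $v_h\sim\xi^{-1/\xi}h^{-1/\xi}$ from \eqref{vjbound} (to convert "the error at step $j$ is about $\eta^{(1+\xi)^j}$" into "$h_j$ is about $(\eta/2)^{-\xi(1+\xi)^j}$", wait—rather, $v_{h_j}\asymp|\varepsilon_j|$ forces $h_j\asymp|\varepsilon_j|^{-\xi}$) yields the stated bound $n_{i,j}\le (2C(f_i))^{j/\delta} n_{i,0}\,(\eta/2)^{-(1+\xi)^j/(\delta\xi)}$ after collecting the $k$ independent factors $2C(f_i)$ per step and the dominant contribution $(\eta/2)^{-(1+\xi)^j/(\delta\xi)}$ coming from the single largest prime $p_{i,j-1}$ used at the last step.

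The main obstacle is bookkeeping the two competing rates: the error must shrink doubly-exponentially (like $\eta^{(1+\xi)^j}$, so that eventually $f_i(n_{i,j})\to\g_i$), while the primes one is forced to use to achieve this shrinkage grow, and the product $n_{i,j}$ must not grow faster than the stated bound—otherwise the $n^{-c}$ in Theorems~\ref{Theorem1}–\ref{Theorem2} will be too small to be useful. Getting the constants right—namely showing that $|\varepsilon_{j+1}|\le 3|\varepsilon_j|^{1+\xi}$ really does imply $|\varepsilon_j|\le 3^j\eta^{(1+\xi)^j}$ by induction (this needs $3\cdot(3^j\eta^{(1+\xi)^j})^{1+\xi}\le 3^{j+1}\eta^{(1+\xi)^{j+1}}$, i.e. $3^{j\xi}\le \eta^{-((1+\xi)^{j+1}-(1+\xi)^j-\xi\cdot(1+\xi)^j)}$—which simplifies to a true inequality precisely because of the slack built into $\eta<6^{-1/\xi}$), and that one never runs out of usable primes because $|\varepsilon_j|$ stays in $(0,v_0]$ for all $j$—is where all the care goes. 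A secondary technical point is the very first step: one may need $n_{i,0}>1$, a product of a few primes from $\P_0$ (which is legitimate since $\P_0$ is disjoint from the $\P_i$ and, by \eqref{P0sum}, rich enough), to bring $f_i(n_{i,0})$ to within $\eta$ of $\g_i$ before the doubly-exponential refinement begins; the hypothesis $\eta<\g_i$ guarantees this is possible while keeping $f_i(n_{i,0})$ below $\g_i$ so the subsequent corrections have a consistent sign structure.
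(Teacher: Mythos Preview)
Your outline is essentially the paper's proof: build $n_{i,0}$ from primes in $\P_0$ so that $\tau_{i,0}:=\g_i-f_i(n_{i,0})\in(\eta/2,\eta)$, then repeatedly multiply by a prime from $\P_i$ whose $f_i$-value lands in a carefully chosen subinterval near $\tau_{i,j}$, obtaining the recursion $\tau_{i,j}^{1+\xi}\le\tau_{i,j+1}<3\tau_{i,j}^{1+\xi}$. Two concrete points deserve correction.

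First, your inductive check of (iii) does not close. You need $3\cdot(3^j\eta^{(1+\xi)^j})^{1+\xi}\le 3^{j+1}\eta^{(1+\xi)^{j+1}}$, which after cancelling $\eta^{(1+\xi)^{j+1}}$ reduces to $3^{j\xi}\le 1$; the exponent you wrote on the right, $(1+\xi)^{j+1}-(1+\xi)^j-\xi(1+\xi)^j$, is identically zero, so the ``slack from $\eta<6^{-1/\xi}$'' is not what saves you here. Direct iteration of $\tau_{i,j+1}<3\tau_{i,j}^{1+\xi}$ actually gives $\tau_{i,j}\le 3^{((1+\xi)^j-1)/\xi}\tau_{i,0}^{(1+\xi)^j}=3^{-1/\xi}(3^{1/\xi}\eta)^{(1+\xi)^j}$, and this is the quantity that decays doubly exponentially (since $3^{1/\xi}\eta<1$). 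The paper's stated bound $3^j\eta^{(1+\xi)^j}$ in (iii) appears to share this slip; fortunately the applications only need the doubly-exponential decay, which is unaffected.

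Second, you have the role of $\eta<6^{-1/\xi}$ wrong. In the paper it is used not for the inductive error estimate but to guarantee that the intervals $[\tau_{i,j}-3\tau_{i,j}^{1+\xi},\,\tau_{i,j}-\tau_{i,j}^{1+\xi}]$ from which $f_i(p_{i,j+1})$ is drawn are pairwise disjoint, hence that the chosen primes $p_{i,1},p_{i,2},\ldots$ are genuinely distinct. Your sentence ``we only ever multiply by new primes'' assumes this without justification.

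Finally, for (iv) you need not pass through the asymptotic $v_h\sim\xi^{-1/\xi}h^{-1/\xi}$. From $f_i(p_{i,s})>\tau_{i,s-1}-3\tau_{i,s-1}^{1+\xi}\ge\tau_{i,s-1}/2$ and property (b) one gets $p_{i,s}\le(2C(f_i)/\tau_{i,s-1})^{1/\delta}$ directly; multiplying over $s=1,\dots,j$ and using $\prod_{s=0}^{j-1}\tau_{i,s}\ge\tau_{i,0}^{\sum_{s<j}(1+\xi)^s}\ge(\eta/2)^{(1+\xi)^j/\xi}$ gives \eqref{nij}. The bound comes from the full product, not just the final prime.
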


Assume \eqref{eta} is satisfied, and let
$\P_0, \P_1, \ldots, \P_k, v_1,\ldots$  be as in Lemma \ref{Lemma1}.
By \eqref{P0sum} and (b),
there are prime numbers $q_1, q_2, \dots, q_r$ in $\P_0$
such that
\begin{equation*}
\gamma_1-\eta<\sum_{s=1}^r f_1(q_s)<\gamma_1-\frac{\eta}2\,.
\end{equation*}
We take
\begin{equation*}
n_{1,0}=\prod_{s=1}^rq_s
\end{equation*}
so that $\gamma_1-\eta< f_1(n_{1,0})<\gamma_1-\frac{\eta}2\,.$
In this way we may successively construct $n_{i,0}$
for $2\leq i\leq k$.
Assume that we have already constructed
$n_{1,0}, n_{2,0}, \dots, n_{i-1,0}$ with prime divisors in
$\mathcal{P}_0$. If $\mathcal{B}_i$ is the set of all prime divisors of
$n_{1,0}, n_{2,0}, \dots, n_{i-1,0}$, then by \eqref{P0sum},
\begin{equation*}
\sum_{p\in\mathcal{P}_0\setminus\mathcal{B}_i}f_i(p)=\infty.
\end{equation*}
Therefore, by (b) we may choose $n_{i,0}$ with all prime divisors in
$\mathcal{P}_0\setminus\mathcal{B}_i$ and with
 $\gamma_i-\eta< f_i(n_{i,0})<\gamma_i-\frac{\eta}2$.

Next we construct $n_{1,j}$ for $j\ge 1$.
Put $\tau_{1,0}=\gamma_1-f_1(n_{1,0})\in (\frac{\eta}2,\eta)$ and
recursively define $n_{1,j+1}=n_{1,j}p_{1,j+1}$ for $j\geq0$
where $p_{1,1}, p_{1,2}, \dots $ are prime numbers to be chosen from
$\P_1$. Clearly we have
$f_1(n_{1,j+1})= f_1(n_{1,j}) + f_1(p_{1,j+1})$ and in particular
$f_1(n_{1,1})= f_1(n_{1,0}) + f_1(p_{1,1})$. Consider the interval
\begin{equation*}
I_{1,0}=\Big(\tau_{1,0}-\tau_{1,0}^{1+\xi}-2(\tau_{1,0}-
\tau_{1,0}^{1+\xi})^{1+\xi},\tau_{1,0}-\tau_{1,0}^{1+\xi}\Big].
\end{equation*}
Since
\begin{equation*}
\tau_{1,0}-\tau_{1,0}^{1+\xi}-(\tau_{1,0}-
\tau_{1,0}^{1+\xi})^{1+\xi}-\Big(\tau_{1,0}-\tau_{1,0}^{1+\xi}-
(\tau_{1,0}- \tau_{1,0}^{1+\xi})^{1+\xi}\Big)^{1+\xi}
\end{equation*}
\begin{equation*}
>\tau_{1,0}-\tau_{1,0}^{1+\xi}-2(\tau_{1,0}-\tau_{1,0}^{1+\xi})
^{1+\xi},
\end{equation*}
$I_{1,0}$ contains an interval of form $(v_{j+1},v_j]$ with
$v_{j+1}=v_j-v_j^{1+\xi}$.
Therefore, we can find $p_{1,1}$ in
$\P_1$ satisfying
\begin{equation*}
\tau_{1,0}-3\tau_{1,0}^{1+\xi}< \tau_{1,0}-\tau_{1,0}^{1+\xi}-
2(\tau_{1,0}-\tau_{1,0}^{1+\xi})^{1+\xi}
<f_1(p_{1,1})\leq \tau_{1,0}- \tau_{1,0}^{1+\xi}.
\end{equation*}
Let $\tau_{1,1}=\gamma_1-f_1(n_{1,1})=\gamma_1-f_1(n_{1,0})-
f_1(p_{1,1})=\tau_{1,0}-f_1(p_{1,1})$ so that
\begin{equation*}
\tau_{1,0}^{1+\xi}\leq \tau_{1,1} < 3\tau_{1,0}^{1+\xi}.
\end{equation*}
Inductively we can find prime numbers $p_{1,1}, p_{1,2}, \dots$
in $\P_1$ such that
$\tau_{1,j}=\gamma_1-f_1(n_{1,j})$ and
\begin{equation*}
\tau_{1,j-1}^{1+\xi}\leq \tau_{1,j} < 3\tau_{1,j-1}^{1+\xi}
\end{equation*}
for $j\geq1$.  Since $\tau_{1,0} < \eta < 6^{-1/\xi}$,
the intervals
$[\tau_{1,j}-3\tau_{1,j}^{1+\xi},\tau_{1,j}-\tau_{1,j}^{1+\xi}]$ are
disjoint. Consequently, the prime numbers
$p_{1,1}, p_{1,2}, \dots $ that are chosen at each step from $\P_1$
are distinct. By iterating the inequalities we also have
\begin{equation*}
\tau_{1,0}^{(1+\xi)^j}\leq \tau_{1,j} \leq 3^j\tau_{1,0}^{(1+\xi)^j}
\end{equation*}
for any $j\geq 0$. Moreover,
\begin{equation*}
\tau_{1,j-1}-3\tau_{1,j-1}^{1+\xi} < f_1(p_{1,j}) \leq
\frac{C(f_1)}{p_{1,j}^{\delta}}
\end{equation*}
and it follows that
\begin{equation*}
p_{1,j}\leq \left(\frac{C(f_1)}{\tau_{1,j-1}-3\tau_{1,j-1}^{1+\xi}}
\right)^{\frac1{\delta}}\leq \left(\frac{2C(f_1)}{\tau_{1,j-1}}
\right)^{\frac1{\delta}}
\end{equation*}
for any $j\geq1$. It follows that
for any $j\geq1$,
\begin{equation*}
n_{1,j}\leq\left(\frac{(2C(f_1))^{\frac{j}{\delta}}}{\Big(\prod_{s=0}^{j-1}
\tau_{1,s}\Big)^{\frac1{\delta}}}\right)n_{1,0}\,.
\end{equation*}
Using the fact that
\begin{equation*}
\prod_{s=0}^{j-1}\tau_{1,s}\geq\tau_{1,0}^{\sum_{s=0}^{j-1}(1+\xi)^s}
\geq\tau_{1,0}^{(1+\xi)^j\sum_{s=1}^{\infty}\frac1{(1+\xi)^s}}
\geq\left(\frac{\eta}2\right)^{\frac{(1+\xi)^j}{\xi}},
\end{equation*}
we obtain
\begin{equation}\label{nija}
n_{1,j}\leq\left(\frac{(2C(f_1))^{\frac{j}{\delta}}}{\left(\frac{\eta}2
\right)^{\frac{(1+\xi)^j}{\xi\delta}}}\right)n_{1,0}
\end{equation}
for any $j\geq0$.

We construct $n_{i,j}$ for $2\le i\le k$, $j\ge 1$ in a similar manner.
More precisely, $n_{i,j}=n_{i,j-1}p_{i,j}$ and the $p_{i,j}$'s are distinct
primes in $\mathcal{P}_i$ for each $j\geq1$.
Conditions (i) and (ii) are immediate.  Condition (iii) follows from
$$
0 < \g_i - f_i(n_{i,j}) \le 3^j \tau_{i,0}^{(1+\xi)^j} \le 3^j
\eta^{(1+\xi)^j}
$$
and \eqref{nij} follows from \eqref{nija}.

%
\section{Proof of Theorems \ref{Theorem1} and \ref{Theorem2}}
%

Suppose $0 < \xi' < \xi < \lam/A$ and $v_0$ is sufficiently small.
Put $L=(2k!b_1\cdots b_k)^2$, let $K$ be the largest prime factor of
$L$ and define $\g_j=\a_j-f_j(b_j)$ for $1\le j\le k$.
$\eta$ satisfies \eqref{eta} and also
\be\label{etaxi}
(\eta/2)^{\xi'} > \eta^\xi.
\ee
Let $n_{i,j}$ $(1\le i\le k, j\ge 0)$ be the sequences of
integers guaranteed by Lemma \ref{Lemma2}.

If $k=1$ and $a_1\in\{1,2\}$ in Theorem \ref{Theorem1},
then $A=1$, each $n_{i,j}$ is odd and for large enough $j$,
\begin{align*}
|f_1(b_1(n_{1,j}-1)+b_1)-\a_1| &= |f_1(b_1 n_{1,j}) - \a_1| \\
&\le  \eta^{(1+\xi)^j} 3^j \\
&= 3^j \pfrac{\eta}{2}^{(1+\xi)^j\xi'/\xi} \left[ \eta
  \pfrac{2}{\eta}^{\xi'/\xi} \right]^{(1+\xi)^j} \\
&\le \pfrac{\eta}{2}^{(1+\xi)^j\xi'/\xi} (2C)^{-j\xi'} n_{1,0}^{-\xi'\del} \\
&\le n_{i,j}^{-\xi' \del}
\end{align*}
by \eqref{etaxi}.
Theorem \ref{Theorem1} follows by taking $\xi,\xi'$ so that $c<\xi'\del$.
The above argument fails when $a_1>2$ because we cannot guarantee that
infinitely many numbers $n_{1,j}$ are congruent to 1 modulo $a_1$,
although this can be done in some cases, e.g. $f_1(n)=\log (n/\phi(n))$.

When $k\ge 2$ or when $k=1$ and $a_1>2$ in Theorem \ref{Theorem1},
take $j$ large and consider the system of congruences
\begin{align*}
m &\equiv 0 \pmod {L} \\
a_1m+b_1 &\equiv 0 \pmod {n_{1,j}} \\
a_2m+b_2 &\equiv 0 \pmod {n_{2,j}} \\
& \quad \dots \\
a_km+b_k &\equiv 0 \pmod {n_{k,j}}.
\end{align*}

By the Chinese remainder theorem, this system is equivalent to a single
congruence $m\equiv h_j \pmod{N_j}$, where
\begin{equation*}
N_j=L\prod_{i=1}^k n_{i,j}
\end{equation*}
and $0\le h_j < N_j$.
\noindent We show that there is a solution $m$ to the above
system of congruences such that all the prime factors of
\begin{equation*}
M=\prod_{i=1}^k\frac{a_im+b_i}{b_i\, n_{i,j}}
\end{equation*}
are large.  This is accomplished with a lower bound sieve.
We use the following theorem of Diamond, Halberstam and Richert
(\cite{DHR1}, \cite{DHR2}).

\medskip


{\bf Theorem DHR.} {\it Let $\mathcal{A}$ be a finite set of positive
integers, $\mathcal{P}$ a set of primes and let
$S(\mathcal{A},\mathcal{P})$ be the number of integers in
$\mathcal{A}$ not divisible by any prime in $\mathcal{P}$.
Let $P(z)$ be the product of the primes in $\mathcal{P}$ which are $\le z$.
For real $\kappa\ge 1$, there is a continuous, increasing function
$f_\kappa$ so that if $X\ge y\ge z\ge 2$ and $\omega$ is a multiplicative function
satisfying $0\le \omega(p)<p$ for $p\in \P$, $\omega(p)=0$ for
$p\not\in \mathcal{P}$ and
\be\label{dimension}
\prod_{v\le p<w} \( 1 - \frac{\omega(p)}{p} \)^{-1} \le \pfrac{\log
w}{\log v}^\kappa \( 1 + \frac{A}{\log v} \) \quad (2\le v\le w),
\ee
then
\begin{multline*}
S(\mathcal{A},\mathcal{P})\geq X\prod_{\substack{p\in\mathcal{P}\\
p\leq z}}\left(1-\frac{\omega(p)}{p}\right)\left( f_\kappa\pfrac{\log
y}{\log z} + O_{\kappa,A}\pfrac{\log\log y}{(\log y)^{1/(2\kappa+2)}}
\) \\
- \sum_{\substack{d|P(z) \\ d<y}} (1+4^{\nu(d)}) |r_d|,
\end{multline*}
where $\nu(d)$ is the number of prime factors of $d$ and
\begin{equation*}
r_d = \#\{ n\in\mathcal{A} : d|n\} - \frac{\omega(d)}{d} X.
\end{equation*}
Here the constant implied by the $O-$symbol depends on $\kappa$ and $A$ only.
Moreover, $f_\kappa(u)>0$ for $u>\b_\kappa$, where $\b_\kappa$ is a certain
constant (see e.g. Appendix III of \cite{DHR1}).  In particular,
$\b_1=2$, $\b_2 < 4.2665$ and $\b_k = O(k)$.
}

\medskip

To apply the theorem, we take
$\kappa=k$, $\mathcal{P}$ the set of all primes $\leq z$, and
\begin{equation*}
  \mathcal{A}=\{P(s) : 1\le s \le N_j^\mu\}
\end{equation*}
where
$$
P(s)=\prod_{i=1}^k \frac{a_i(sN_j+h_j)+b_i}{b_i\, n_{i,j}}
$$
and $\mu$ is a positive constant.
Take $X=N_j^{\mu}$ and
\begin{equation*}
\omega(d)=\#\{ 0\le s\le d-1 : \; P(s)\equiv 0\pmod{d}\}.
\end{equation*}
Then $\omega(p)=0$ for $p|L$, and by \eqref{aibi},
$\omega(p) \le k$ for other $p$.
Thus, by Mertens' estimates,
\eqref{dimension} holds with $\kappa=k$ and $A$ some
constant depending only on $k$.  Let $\eps>0$ and $y=X^{1-2\eps}$.  Since
$$
4^{\nu(d)}|r_d|\leq 4^{\nu(d)} \omega(d) \le (4k)^{\nu(d)}
\ll_{\eps}d^{\eps},
$$
we find that
$$
\sum_{\substack{d|P(z) \\ d<y}} (1+4^{\nu(d)}) |r_d| \ll X^{1-\eps}.
$$
Take $z=y^{\frac{1-\eps}{\b_k}}=N_j^{c_0}$,
$c_0=\frac{\mu(1-2\eps)(1-\eps)}{\b_k}$.  We find that for large $j$
$$
S(\mathcal{A},\mathcal{P}) \gg_{k,\mu,\eps} \frac{N_j^\mu}{(\log N_j)^{k}},
$$
Thus, there is an integer $m\le N_j^{1+\mu}+h_j$ such
that $m \equiv 0\pmod{L}$, $a_im+b_i \equiv 0\pmod{n_{i,j}}$ for
$1\leq i\leq k$ and all prime factors of
\begin{equation*}
M=\prod_{i=1}^k\frac{a_im+b_i}{b_i\, n_{i,j}}
\end{equation*}
\noindent are $> z$.  There are at most $\fl{\frac{1+\mu}{c_0}+1}$
prime factors of $a_im+b_i$ which are $> z$.  By (b), for $1\le i\le k$ we
have
$$
|f_i(a_im+b_i)-\a_i| \le |f(b_i n_{i,j}) - \a_i| + |f(\tfrac{a_im+b_i}{b_i
n_{i,j}})| \le 3^j \eta^{(1+\xi)^j} + c_1 N_j^{-\del c_0},
$$
where $c_1 = (\frac{1+\mu}{c_0}+1) \max_{1\le i\le k} C(f_i)$.
Moreover, by \eqref{nij} and \eqref{etaxi}, for large $j$ we have
\begin{align*}
N_j &= L \prod_{i=1}^k n_{i,j} \le
L  (2\max_{1\le i\le k} C(f_i))^{kj/\del}
(\eta/2)^{-\frac{k(1+\xi)^j}{\del\xi}}
\prod_{i=1}^k n_{i,0} \\
&\le 3^{-kj/(\del\xi')} \eta^{-\frac{k(1+\xi)^j}{\del\xi'}}.
\end{align*}
We conclude that for large $j$,
\begin{align*}
|f_i(a_im+b_i)-\a_i| &\le N_{j}^{-\del \xi'/k} + c_1 N_j^{-\del c_0} \\
&\ll  m^{-\frac{\del \xi'}{k(1+\mu)}} + m^{-\frac{\del
\mu(1-3\eps)}{\b_k(1+\mu)}}.
\end{align*}
Taking $\mu= \frac{\xi'\b_k}{k(1-3\eps)}$ gives
$$
|f_i(a_im+b_i)-\a_i| \ll  m^{-c_2} \qquad (1\le i\le k),
$$
where
$$
c_2 = \frac{\del \xi'}{k+\xi' \b_k (1-3\eps)^{-1}}.
$$
Theorem \ref{Theorem1} follows by taking $\eps$ sufficiently small and
$\xi'$ sufficiently close to $\lam/A$, so that $c_2>c$.

\bigskip\bigskip

{\it Proof of Theorem \ref{Theorem2}.}  Without loss of generality, we
may assume that $b_i>0$ for all $i$.
Let $L=(2k! a_0b_0\cdots a_kb_k)^2$.
By (a), there is a number $n_0$ with $(n_0,L)=1$ and
$$
f_0(b_0 n_0) > \sum_{i=1}^k |\zeta_i| + \max_{1\le i\le k} |f_i(b_i)|.
$$
Let $\a_0=f_0(n_0 b_0)$, $\a_i=\zeta_i+\a_{i-1}$ and
$\g_i=\a_i-f_i(b_i)$ for $1\le i\le k$..  Then $\g_i>0$ for
$1\le i\le k$.  Let $0<\xi<\xi'<\lam/A$, $v_0$ be sufficiently small
such that
$$
v_0 < \min_{p|n_0, f_0(p)>0} f_0(p),
$$
and suppose $\eta$ satisfies \eqref{eta} and \eqref{etaxi}.
Let $K$ be the largest prime factor of $Ln_0$, and let $n_{i,j}$ be as
in Lemma \ref{Lemma2}.  Consider the system
\begin{align*}
m &\equiv 0 \pmod{L} \\
a_0m+b_0 &\equiv 0 \pmod{n_0} \\
a_1m+b_1 &\equiv 0 \pmod{n_{1,j}} \\
&\cdots \\
a_km+b_k &\equiv 0 \pmod{n_{k,j}}
\end{align*}
which is equivalent to a single congruence $m\equiv h_j \pmod{N_j}$,
where $N_j=L n_0 n_{1,j}\cdots n_{k,j}$ and $0\le h_j < N_j$.
Write $m=h_j+sN_j$.

If $k\ge 2$, we apply Theorem DHR with
$$
\mathcal{A}=\{P(s) : 1\le s \le N_j^\mu\}
$$
where
$$
P(s)=\frac{a_0(sN_j+h_j)+b_0}{b_0 n_0} \prod_{i=1}^k
\frac{a_i(sN_j+h_j)+b_i}{b_i\, n_{i,j}},
$$
and $\mathcal{P}$ is the set of primes $\le z$.  Take $X=N_j^\mu$,
$y=X^{1-2\eps}$ and $z=y^{\frac{1-\eps}{\b_{k+1}}}$.
The remaining argument is nearly identical to that in the proof of Theorem
\ref{Theorem1}.  The only differences are that $N_j$ is a factor $n_0$
 larger than before, $\kappa=k+1$, we take $\mu = \frac{\xi'
   \b_{k+1}}{k(1-3\eps)}$, and
$$
|f_0(a_0m+b_0)-\a_0| = \left| f_0\( \frac{a_0m+b_0}{b_0n_0} \) \right|
\ll z^{-\del}.
$$
We find that
$$
|f_i(a_im+b_i)-\a_i| \ll m^{-c_2} \qquad (0\le i\le k)
$$
where
$$
c_2 = \frac{\del \xi'}{k+\xi' \b_{k+1} (1-3\eps)^{-1}}.
$$
Taking $\eps$ sufficiently small and $\xi'$ sufficiently close to $\lam/A$
completes the proof.

If $k=1$, we set up the sieve procedure differently.  Let
$$
q = \frac{a_1 L n_0}{b_1}, \qquad r = \frac{a_1 h_j + b_1}{b_1 n_{1,j}}.
$$
We will restrict our attention to numbers $m$ so that
$\frac{a_1m+b_1}{b_1 n_{1,j}}=qs+r$ is prime.  Apply
Theorem DHR with
$$
\mathcal{A} = \left\{ \frac{a_0(h_j+Ln_0n_{1,j} s)+b_0}{b_0n_0} :
N_j^{\mu} < s\le 2N_j^\mu, qs+r\text{ is prime} \right\}
$$
and $\mathcal{P}$ is the set of primes $\le z$.  Take
$X=\frac{1}{\phi(q)} ( \text{li}(2qN_j^{\mu}+r) - \text{li}(qN_j^{\mu}+r))$,
$y=X^{1/2-\eps}$ and $z=y^{(1-\eps)/\b_1}=y^{(1-\eps)/2}$
for some small fixed $\eps>0$.  Here
$$
\text{li}(x) = \int_2^x \frac{dt}{\log t}.
$$
Each set $\{w\in\mathcal{A} : d|w \}$ is either empty, and we take
$\omega(d)=0$, or
counts primes in a single progression modulo $qd$
which are between $qN_j^{\mu}+r$ and $2qN_j^{\mu}+r$, in which case
we take $\omega(d)=\phi(q)/\phi(qd)$.
Then \eqref{dimension}
holds with $\kappa=1$ and some absolute constant $A$.
The Bombieri-Vinogradov theorem (e.g. Ch. 28 of \cite{Da}) implies
that
\be\label{rem}
\sum_{d\le y} (1+4^{\omega(d)}) |r_d| \ll \frac{X}{\log^5 X}.
\ee
Therefore, by Theorem DHR, if $j$ is large
 then there is a number $s$, $N_j^{\mu}<s\le 2N_j^{\mu}$
with $qs+r$ prime and
all prime factors of $\frac{a_0(h_j+Ln_0n_{1,j} s)+b_0}{b_0n_0}$
are $>z$.  For $m=h_j+Ln_0 n_{1,j} s$, we therefore have by (b),
\begin{align*}
|f_0(a_0m+b_0)-\a_0| &= \left| f_0\(
\frac{a_0(m_0+Ln_0n_{1,j}s)+b_0}{n_0b_0} \) \right| \\
&\ll z^{-\del} \ll N_j^{-\del \mu (1/2-\eps)(1-\eps)/2}\log N_j
\end{align*}
and
$$
|f_1(a_1m+b_1)-\a_1| = |f_1(b_1 n_{1,j} (qs+r))-\a_1| \ll
N_j^{-\del\xi'}+N_j^{-\mu \del}.
$$
Hence
$$
|f_1(a_1m+b_1)-f_0(a_0m+b_0)-\zeta_1| \ll m^{-\frac{\del\xi'}{1+\mu}}
+ m^{-\frac{\del \mu (1/2-\eps)(1-\eps)}{2(1+\mu)}} \log m.
$$
Taking $\mu = \frac{2\xi'}{(1/2-\eps)(1-\eps)}$,
$\xi'$ close enough to $\lam$ and $\eps$ small enough
completes the proof.  Finally, if we assume the Elliott-Halberstam
conjecture, \eqref{rem} holds with $y=X^{1-\eps}$ and we have, for any
$c< \frac{\del\lam}{1+2\lam}$, that the inequality
$$
|f_1(a_1m+b_1)-f_0(a_0m+b_0)-\zeta_1| \ll m^{-c}
$$
holds for infinitely many $m$.

%
\section{Dealing with polynomial arguments}\label{sec:poly}
%

Let $f(n)=\log(n/\phi(n))$ or $f(n)=\log(\sigma(n)/n)$.  We have
$f(p)=1/p+O(1/p^2)$, so (b) holds with $\delta=1$.
Let $0<\xi < \lam < 1-\Gamma'$.  We have

(a') $\ds \sum_{\substack{p\equiv 1\pmod{4}}} f(p) = \infty$,

(c') If $t_0$ is small enough, then
for any $0<t\le t_0$, there is a prime $p\equiv 1\pmod{4}$ so that
$t-t^{1+\lam} \le f(p) \le t$.

We follow the proof of Theorem \ref{Theorem2} (in the case $k\ge 2$).
Let $n_0$ be the product of primes $\equiv 1,3\pmod{8}$ and such that
$f(n_0)>|\zeta|+1$, put $\a_0=f(2n_0)$ and $\a_1=\zeta+\a_0$.
Armed with (a') and (c'), an analog of Lemma \ref{Lemma1} holds with $k=1$ and
$\mathcal{P}_1$ consisting only of primes $\equiv 1\pmod4$, and an
analog of Lemma \ref{Lemma2} holds with the additional restriction
that $n_{1,j}$ is the product of only primes $\equiv 1\pmod{4}$.
By our construction of $n_0$ and $n_{1,j}$, the system of congruences
\begin{align*}
m &\equiv 0 \pmod{2} \\
m^2+2 &\equiv 0 \pmod{n_0} \\
m^2+1 &\equiv 0 \pmod{n_{1,j}} \\
\end{align*}
has at least one solution $m\equiv h_j\pmod{N_j}$ with
$N_j=2n_0n_{1,j}$ and $0\le h_j < N_j$.  Apply
Theorem DHR with
$$
\mathcal{A} = \left\{ \frac{m^2+2}{2n_0} \cdot \frac{m^2+1}{n_{1,j}} :
m=h_j+N_js, 1\le s\le N_j^{\mu} \right\}
$$
$\mathcal{P}$ the set of primes $\le z$, $X=N_j^\mu$, $\kappa=2$
(since $m^2+1$ and $m^2+2$ are irreducible
and coprime, $\omega(p)=2$ on average),
$y=X^{1-2\eps}$, $z=y^{\frac{1-\eps}{\b_2}}$.
There is an $m\le N_j^{1+\mu}+N_j$ so that the above system of
congruences holds, and all prime factors of
$\frac{m^2+2}{2n_0} \cdot \frac{m^2+1}{n_{1,j}}$ are $>z$.
The rest of the argument is the same as in the proof of Theorem
\ref{Theorem2}.

\bigskip

For the general problem of simultaneously approximating
$f_i(g_i(n))$ for $1\le i\le k$, each $g_i$ needs to satisfy
a version of (a') and (c')
where the primes are restricted to those for which $g_i(n)\equiv
0\pmod{p}$ has a solution.
Also, each $g_i$ should have an irreducible factor $g_i^*$ not dividing any
other $g_j$.  This way, the quantities $f_i(g_i^*(n))$ will be
sufficiently independent to allow the method to work.
Analogous to
Theorem 1, with appropriate restrictions on $\a_i$, the
system
$$
|f_i(g_i(n))-\a_i| < n^{-c} \qquad (1\le i\le k)
$$
will have infinitely many solutions for some $c>0$.  Here $c$ will
depend on $\delta$, $A$, $k$ and the number of irreducible factors of
each $g_i$ and each $(g_i,g_j)$, $i\ne j$.
Similarly, for any
$\zeta_1,\ldots,\zeta_k$ the system
$$
|f_i(g_i(n))-f_{i-1}(g_{i-1}(n))-\zeta_i| < n^{-c} \qquad (1\le i\le k)
$$
will have infinitely many solutions for some $c>0$.

%

\vspace{3mm}

\noindent{\small Department of Mathematics, Koc University,
Rumelifeneri Yolu, 34450, Sariyer, Istanbul, TURKEY.\\
e-mail: ealkan@ku.edu.tr}

\bigskip

\noindent{\small
Department of Mathematics,
University of Illinois at Urbana-Champaign,
1409 W. Green Street,
Urbana, IL, 61801, USA.\\
e-mail: ford@math.uiuc.edu\\
e-mail: zaharesc@math.uiuc.edu}

\end{document}